\numberwithin{equation}{section}
\theoremstyle{plain}
 \newtheorem{theorem}{Theorem}[section]
 \newtheorem{lemma}[theorem]{Lemma}
 \newtheorem{corollary}[theorem]{Corollary}
\theoremstyle{definition}
\theoremstyle{remark}
\newenvironment{enumeratei}{\begin{enumerate}[\quad\upshape (i)]} {\end{enumerate}}
\newcommand \ts {$(T,S)$}
\newcommand \chain[1] {\mathsf C_{#1}} 
\newcommand \cht {\chain 2} 
\newcommand \aeqref [1] {(\upshape A\ref{#1})}
\newcommand\set [1]{\{#1\}}
\newcommand \tbf[1] {\textbf{#1}}
\newcommand \nonparallel {\mathrel{\not\kern-1.5pt{\mathop\parallel}}}
\newcommand \red [1] {{\color{red}#1\color{black}}}
\newcommand \nothing [1] {}
\begin{document}
\title
{Permuting 2-uniform tolerances on lattices}

\author[G.\ Cz\'edli]{G\'abor Cz\'edli}
\address{University of Szeged, Bolyai Institute, Szeged,
Aradi v\'ertan\'uk tere 1, Hungary 6720}
\email{czedli@math.u-szeged.hu}
\urladdr{http://www.math.u-szeged.hu/~czedli/}

\thanks{This research was supported by NFSR of Hungary (OTKA), grant
number K 115518}

\subjclass{06B10, 06B99}


\keywords{Lattice tolerance, permuting tolerances, uniform tolerance, 2-uniform congruence, lattice of finite length}

\dedicatory{Dedicated to the memory of Ivo G. Rosenberg}

\date{September 24, 2019; \red{\emph{always} check the author's homepage for possible updates}}

\begin{abstract}A \emph{$2$-uniform tolerance} on a lattice is a compatible tolerance relation such that all of its blocks are 2-element. We characterize permuting pairs of 2-uniform \emph{tolerances} on lattices of finite length. In particular, any two 2-uniform \emph{congruences} on such a lattice permute.
\end{abstract}

\maketitle

\section{Introduction and result}
In addition to his famous theorem on functional completeness over finite sets, the words ``tolerance'' and ``lattice'' also remind me of Ivo G.\ Rosenberg, since both are common in the title of the present paper and that of  our joint lattice theoretical paper \cite{chczgigr} (co-authored also by I.\ Chajda). A part of my motivation is to keep his memory alive.

This short paper is structured as follows. First, after few necessary definitions, we formulate our main result, Theorem~\ref{thmmain}. Then, still in  this section, we present the rest of our motivation and we point out how the present theorem supersedes its precursor on 2-uniform congruences. Section~\ref{sectionproof} is devoted to the proof of Theorem~\ref{thmmain}.

\subsection*{Definitions and the result}
By a \emph{tolerance} $T$ on a lattice $L$ we mean a reflexive, symmetric, and compatible relation on $L$. The maximal subsets $X$ of $L$ such that $X^2\subseteq T$ are called the \emph{blocks} of $L$. If $T$ is a tolerance such that  each of its blocks consists of exactly two elements, then we call it a \emph{$2$-uniform tolerance} on $L$. As usual, for tolerances $T$ and $S$ on $L$, the \emph{product} $T\circ S$ is defined to be $\{(x,z)$ : there exists a $y\in L$  such that $(x,y)\in T$  and $(y,z)\in S\}$. We say that $T$ and $S$ \emph{permute} if $T\circ S=S\circ T$. Next, assume that $T$, $S$, and $R$ are 2-uniform tolerances on a lattice $L$, and let $u\in L$. Since tolerance blocks are known to be convex sublattices by, say, Cz\'edli~\cite{czglperrho} and since the singleton set $\set{u}$ is not an $R$-block by 2-uniformity, at least one of the following two possibilities  holds:
\begin{enumeratei}
\item\label{enucsgytbxa} there exists a lower cover $v$ of $u$ (in notation, $v\prec u$) such that $\set{v,u}$ is an $R$-block; then $u$ is called an \emph{$R$-top} (element) and $v$ is the \emph{lower $R$-neighbour} of $u$; or
\item\label{enucsgytbxb} there exists an upper cover $w$ of $u$ such that $\set{u,w}$ is an $R$-block; then $u$ is called an $R$-bottom (element) and $w$ is the \emph{upper $R$-neighbour} of $u$.
\end{enumeratei}
Since the $R$-blocks are convex sublattices, it is easy to see that $v$
 in \eqref{enucsgytbxa} is unique, so is $w$ in \eqref{enucsgytbxb} (this explains the definite articles preceding them),  and at least one of $v$ and $w$ exists. 
Of course, the concepts above are meaningful with $T$ or $S$ instead of $R$. 
If $u$ is both a $T$-bottom and an $S$-bottom, then we call it a   \emph{two-fold \ts-bottom}, or a \emph{two-fold} bottom if $T£$ and $S$ are understood. \emph{Two-fold \ts-tops} are defined dually as elements that are simultaneously  $T$-tops and $S$-tops.
Finally, we say that $T$ and $S$ are \emph{amicable} if the following two conditions hold for every $u$ in $L$.
\begin{enumerate}[({\upshape A}1)]
\item\label{eqjnKbxa} If $u$ is a two-fold \ts-top, $u\prec v$ and $(u,v)\in T\cup S$, then $v$ is a two-fold \ts-top. 
\item\label{eqjnKbxb} If $u$ is a two-fold \ts-bottom, $v\prec u$ and $(v,u)\in T\cup S$, then $v$ is a two-fold \ts-bottom.
\end{enumerate}

Note that \aeqref{eqjnKbxa} is the dual of \aeqref{eqjnKbxb}. 
The conjunction of \aeqref{eqjnKbxa} and \aeqref{eqjnKbxb} is easy to imagine as follows: in every component of the graph $(L;T\cup S)$, covers of two-fold tops are two-fold tops and lower covers of two-fold bottoms are two-fold bottoms. 
Now, we are in the position to formulate our
result.

\begin{theorem}\label{thmmain} Let $T$ and $S$ be $2$-uniform tolerances on a lattice $L$ that contains no infinite chain. Then $T$ and $S$ permute if and only if they are amicable.
\end{theorem}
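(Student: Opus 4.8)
The plan is to prove the two implications separately, exploiting two symmetries throughout. First, amicability, permutability, and the notions of two-fold top/bottom are all invariant under swapping $T$ and $S$; second, \aeqref{eqjnKbxa} is the order-dual of \aeqref{eqjnKbxb}, and $T,S$ remain $2$-uniform tolerances (with permutability preserved) in the dual lattice $L^{\mathrm d}$. Consequently, for necessity it suffices to derive \aeqref{eqjnKbxa}, and for sufficiency it suffices to prove $T\circ S\subseteq S\circ T$: applying the latter to the pair $(S,T)$, which is again amicable, yields the reverse inclusion and hence equality.

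\textbf{Necessity (permute $\Rightarrow$ amicable).} Assume $T\circ S=S\circ T$ and verify \aeqref{eqjnKbxa}. Let $u$ be a two-fold $\ts$-top with $u\prec v$ and $(u,v)\in T\cup S$; by the $T\leftrightarrow S$ symmetry I may assume $(u,v)\in T$. Then $\{u,v\}$ is a $T$-block, so $v$ is already a $T$-top and only its $S$-top-ness is in question. Since $u$ is an $S$-top, let $b\prec u$ be its lower $S$-neighbour, so $(b,u)\in S$; combining with $(u,v)\in T$ gives $(b,v)\in S\circ T=T\circ S$, so there is $c$ with $(b,c)\in T$ and $(c,v)\in S$. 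Because $b<u<v$, the pair $\{b,v\}$ is not a covering pair, which rules out $c=v$; and since $b<v$, a one-line check using that $T$- and $S$-blocks are covering pairs eliminates $v\prec c$ as well. Hence $c\prec v$ and $\{c,v\}$ is an $S$-block, so $v$ is an $S$-top and therefore a two-fold top, as required.

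\textbf{Sufficiency (amicable $\Rightarrow$ permute).} Take $(x,z)\in T\circ S$ witnessed by $y$ with $(x,y)\in T$, $(y,z)\in S$; I must place $(x,z)$ in $S\circ T$. The degenerate cases $x=y$ and $y=z$ are immediate, so assume $\{x,y\}$ is a $T$-block and $\{y,z\}$ an $S$-block, and split into four configurations by the two covering directions. In the \emph{meet} configuration $x\prec y\succ z$ the witness is $y'=x\wedge z$: meeting the $S$-block $(z,y)$ with $x$ gives $(x,x\wedge z)\in S$, and meeting the $T$-block $(x,y)$ with $z$ gives $(x\wedge z,z)\in T$, so $(x,z)\in S\circ T$ by compatibility alone; the \emph{join} configuration $x\succ y\prec z$ is dual, using $y'=x\vee z$. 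The \emph{stacked} configurations $x\prec y\prec z$ and its dual are the crux. In the former I subdivide: if $z$ is a $T$-top with lower $T$-neighbour $w\neq y$, then $T$-meeting $\{w,z\}$ with $\{x,y\}$ and $S$-meeting $\{y,z\}$ with $w$, together with the \emph{uniqueness} of lower/upper neighbours, forces $\{x,w\}$ to be an $S$-block and $\{w,z\}$ a $T$-block, giving the witness $w$; dually, if $x$ is an $S$-bottom with upper $S$-neighbour $x^{+}\neq y$, joins produce the witness $x^{+}$. The easy residual case where $\{x,y\}$ is also an $S$-block and $\{y,z\}$ also a $T$-block is settled by the witness $y$.

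\textbf{The main obstacle.} What remains, and is the heart of the matter, is the stacked case in which $x$ is \emph{purely} an $S$-top and $z$ is \emph{purely} a $T$-bottom (with the shared block not doubling up). Here the two compatibility tricks above are unavailable, and one checks directly that $(x,z)$ genuinely fails to lie in $S\circ T$ unless this configuration is forbidden. My plan is to forbid it using amicability together with the absence of infinite chains: applying \aeqref{eqjnKbxa} and \aeqref{eqjnKbxb} to the edges $x\prec y\prec z$ forces a rigid ``pure alternating'' local pattern ($x,z$ of type $T$-bottom/$S$-top and $y$ of type $T$-top/$S$-bottom), after which a descent along the graph $(L;T\cup S)$ — well founded precisely because $L$ has no infinite chain — is used to reach an extremal element whose forced two-fold structure contradicts the purity propagated upward by \aeqref{eqjnKbxa}. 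I expect this descent step to be the delicate part, as it is the only place where both the finiteness hypothesis and the full force of amicability are simultaneously indispensable.
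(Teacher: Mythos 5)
Your architecture (two symmetries, necessity via permutability applied to $(b,v)$, the meet/join configurations via compatibility, the off-axis stacked subcases via Lemma~\ref{lemmalshspcWtlZ}\eqref{lemmalshspcWtlZc} and uniqueness of neighbours) is sound, and your necessity argument is complete and even leaner than the paper's Lemma~\ref{lemmanckWS}, since you only need two-fold-topness rather than the preservation of the split/adherent distinction. The genuine gap is that the heart of the sufficiency direction --- forbidding the residual stacked configuration --- is only announced, not proved: you say you ``expect this descent step to be the delicate part,'' and the mechanism you sketch does not work as stated. A descent from $x$ through $(L;T\cup S)$ terminates at a two-fold \ts-bottom, but nothing in \aeqref{eqjnKbxa} or \aeqref{eqjnKbxb} transports that information back up to $x$: \aeqref{eqjnKbxb} propagates two-fold-bottomness \emph{downward} and \aeqref{eqjnKbxa} propagates two-fold-topness \emph{upward}, so the extremal element you reach below $x$ yields no contradiction with the purity of $x$. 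The paper goes the other way: it \emph{ascends} from $z$, taking upper $T$- and $S$-neighbours alternately, $x_2\prec x_3\prec\cdots$, until the no-infinite-chain hypothesis forces termination at some $x_n$, which is automatically a two-fold \ts-top; the crucial intermediate claim is then that $x_{n-2}$ is a two-fold \ts-bottom, which in the split case requires Lemma~\ref{lemmalshspcWtlZ}\eqref{lemmalshspcWtlZc} applied to the meet of the two lower neighbours of $x_n$ together with the uniqueness statement \eqref{lemmalshspcWtlZa}; only then does \aeqref{eqjnKbxb} carry two-fold-bottomness down the chain to $x_0=x$, contradicting the assumption that $x$ is not an $S$-bottom. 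This pivot from ascent to descent, and the claim about $x_{n-2}$, are exactly the missing engine of your outline.

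A secondary defect is that your four named subcases of $x\prec y\prec z$ are not exhaustive. For example, the configuration in which $\set{x,y}$ is both a $T$-block and an $S$-block while $z$ is not a $T$-top is covered by none of them; there $y$ is an adherent \ts-top, and \aeqref{eqjnKbxa} forces $z$ to be a two-fold top after all, folding the case back into your subcases with witness $w$ or $y$ --- this is precisely the paper's ``adherent bottom'' endgame, where one must also verify (via Lemma~\ref{lemmalshspcWtlZ}\eqref{lemmalshspcWtlZc} and \eqref{lemmalshspcWtlZa}) that the lower $T$-neighbour of $z$ coincides with $y$. You need to state and handle this mixed case and its dual explicitly.
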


\subsection*{History and further motivation}
Beginning with 
Chajda and Zelinka~\cite{chajdazelinka}, several papers deal with tolerances on lattices; this is exemplified, without seeking completeness with the following list, by  Bandelt~\cite{bandelt}, Chajda~\cite{chajdabook}, Chajda, Cz\'edli, and Rosenberg~\cite{chczgrh}, Cz\'edli~\cite{czglperrho}, Cz\'edli and Gr\"atzer~\cite{czggg}, Grygiel and Radeleczki~\cite{grygrad}, and Kindermann~\cite{kindermann}.
However, the history of the research leading to the present paper began with a problem raised by Gr\"atzer, Quackenbush, and E. T. Schmidt~\cite{GrQuSch}. They asked whether a finite lattice $L$ is necessarily congruence permutable if  any two blocks of each congruence are isomorphic (sub)lattices. Soon thereafter, Kaarli~\cite{kaarli} 
gave an affirmative answer; in fact, he proved even more: if any two blocks of each congruence are of the same size, then the finite lattice in question is congruence permutable. This result was followed by Cz\'edli~\cite{czg2uni1} and \cite{czg2uni2}, which state that in certain finite algebras (including finite lattices), any two 2-uniform congruences permute; a \emph{$2$-uniform congruence} is, of course, a 2-uniform tolerance that happens to be a congruence. Recently, Cz\'edli~\cite{czgdoubling} has applied 2-uniform (and even more general) tolerances in a new construction of modular lattices.

Clearly, any two 2-uniform congruences are amicable. Hence, Theorem~\ref{thmmain} immediately implies the following corollary.

\begin{corollary}\label{corolmHjR}  
If all chains of a lattice $L$ are finite, then any two $2$-uniform congruences of $L$ permute.
\end{corollary}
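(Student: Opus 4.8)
The plan is to deduce the corollary directly from Theorem~\ref{thmmain}. The hypothesis that all chains of $L$ are finite is, word for word, the hypothesis that $L$ contains no infinite chain, so the theorem applies to any pair $T,S$ of 2-uniform congruences on $L$. Since the theorem asserts that permutability is equivalent to amicability, it suffices to verify that every pair of 2-uniform congruences is amicable; this is exactly the point the remark preceding the corollary summarizes with ``clearly''.

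The one thing to check is therefore that conditions \aeqref{eqjnKbxa} and \aeqref{eqjnKbxb} hold. The key observation is that transitivity makes the notions of block and class coincide: for a 2-uniform congruence $T$, each $T$-block is a full congruence class, a 2-element convex sublattice $\set{a,b}$ with $a\prec b$, and every element belongs to exactly one such class. Consequently each element is either a $T$-top (the upper element $b$ of its class) or a $T$-bottom (the lower element $a$), and an element that is a $T$-top has the property that every element $T$-related to it lies weakly below it.

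Now suppose $u$ is a two-fold \ts-top, so $u$ is simultaneously the top of its $T$-class and of its $S$-class. If some $v$ satisfied $u\prec v$ and $(u,v)\in T\cup S$, then $v$ would be $T$-related or $S$-related to $u$, hence would lie in $u$'s $T$-class or $S$-class; but $u$ is the top of both of these classes, forcing $v\le u$ and contradicting $u\prec v$. Thus the antecedent of \aeqref{eqjnKbxa} is never satisfied, and \aeqref{eqjnKbxa} holds vacuously; dually, \aeqref{eqjnKbxb} holds vacuously. Hence $T$ and $S$ are amicable, and Theorem~\ref{thmmain} yields that they permute.

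There is essentially no obstacle here beyond isolating the right consequence of transitivity: the entire content is that, unlike a general 2-uniform tolerance, a congruence cannot have an element that is both a top and has a relational neighbour strictly above it. Once this is phrased, \aeqref{eqjnKbxa} and \aeqref{eqjnKbxb} become vacuous and the corollary is immediate.
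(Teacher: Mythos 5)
Your proof is correct and follows the paper's intended route exactly: the paper dismisses the amicability of two 2-uniform congruences with a single ``clearly'' and then invokes Theorem~\ref{thmmain}, and your argument simply supplies the details of that ``clearly,'' namely that for a congruence the blocks are the (disjoint) congruence classes, so a two-fold \ts-top can have no relational neighbour strictly above it and \aeqref{eqjnKbxa}, \aeqref{eqjnKbxb} hold vacuously. No gaps.
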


Although this statement is formulated only for lattices, it supersedes \cite{czg2uni1} and \cite{czg2uni2} in the sense that the lattice in Corollary~\ref{corolmHjR} need not be finite. 
For $n\in\mathbb N:=\set{1,2,3,\dots}$, let  $\chain n$ denote the n-element chain. In order to show an infinite example that belongs to the scope of Corollary~\ref{corolmHjR}, let $K$ be an arbitrary infinite lattice without infinite chains. (For example, we can take all the $\chain n$, $3\leq n\in\mathbb N$, and glue their bottoms into a common bottom and glue their tops into a common top.) Define $L:=\cht\times \cht\times K$, and let $\alpha$ and $\beta$ be the kernel of the first projection and that of the second projection, respectively; then $\alpha$ and $\beta$ are 2-uniform congruences and $L$ has no infinite chain.
Two examples of amicable pairs of 2-uniform tolerances are shown if Figure~\ref{fig}, where the $T$-blocks are given by solid grey ovals while the $S$ blocks by dotted black ones.  Finally, note that neither Theorem~\ref{thmmain}, nor Corollary~\ref{corolmHjR} can be extended to an arbitrary lattice. This is exemplified by the lattice of all integer numbers with the usual ordering; this lattice has exactly two 2-uniform congruences but they do not permute.  

\begin{figure}[htb] 
\centerline
{\includegraphics[scale=1.4]{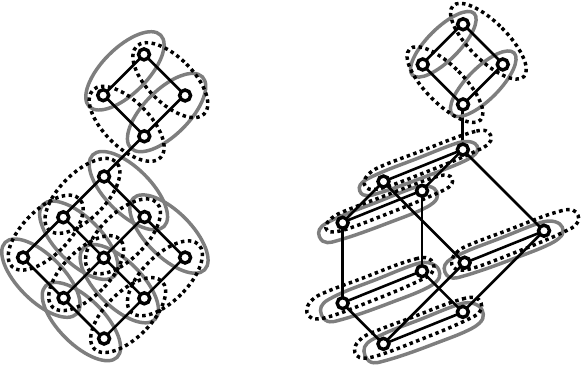}}
\caption{Two examples}
\label{fig}
\end{figure}%

\section{The proof of the  result}\label{sectionproof}

\begin{lemma}\label{lemmalshspcWtlZ}
Let $L$ be a lattice without infinite chains, and let $R$, $T$, and $S$ be $2$-uniform tolerances on $L$. Then, for any $x,y,z,a,b,u\in L$, the following assertions hold.
\begin{enumeratei}
\item\label{lemmalshspcWtlZa}
If $x$ and $y$ are lower $R$-neighbours of $z$, then $x=y$.
\item\label{lemmalshspcWtlZb}
If $(x,y)\in R$, then $x=y$, or $x\prec y$, or $y\prec x$.
\item\label{lemmalshspcWtlZc}
If $a\neq b$, $a$ is the lower $T$-neighbour of $u$, and $b$ is the lower $S$-neighbour of $u$, then  $a\wedge b$ is the lower $S$-neighbour of $a$ and the lower $T$-neighbour of $b$.
\end{enumeratei}
\end{lemma}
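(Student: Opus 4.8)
The plan is to derive all three parts from two standing features of any $2$-uniform tolerance. First, compatibility means that each of $R$, $S$, $T$ is a sublattice of $L^2$, hence closed under componentwise meets and joins. Second, by the result cited in the introduction each block is a \emph{convex sublattice}, and by $2$-uniformity each block has exactly two elements. The recurring device is that, although a tolerance is not transitive and so pairs cannot simply be composed, I can meet or join two related pairs to manufacture a new related pair whose coordinates I control.

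For \eqref{lemmalshspcWtlZa} I argue by contradiction, assuming $x\ne y$ with $x,y\prec z$ both lower $R$-neighbours of $z$, so $(x,z)\in R$ and, by symmetry, $(z,y)\in R$. Meeting these two pairs componentwise gives $(x\wedge z,\,z\wedge y)=(x,y)\in R$, since $x,y\le z$. Then $\{x,y,z\}$ is a three-element set (the elements are distinct: $x\ne y$ by assumption, and $x,y\ne z$ as $x,y\prec z$) whose square lies in $R$, so it is contained in a single $R$-block, contradicting $2$-uniformity; hence $x=y$. For \eqref{lemmalshspcWtlZb}, if $(x,y)\in R$ with $x\ne y$ then $\{x,y\}$ lies in an $R$-block $B$, and $|B|=2$ forces $B=\{x,y\}$. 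Since $B$ is a sublattice, $x\wedge y$ and $x\vee y$ lie in $B=\{x,y\}$, which makes $x$ and $y$ comparable, say $x<y$; and since $B$ is convex with only two elements, nothing lies strictly between them, i.e.\ $x\prec y$ (the symmetric case giving $y\prec x$).

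For \eqref{lemmalshspcWtlZc} I put $m:=a\wedge b$. From $a\ne b$ together with $a,b\prec u$ one checks that $a$ and $b$ are incomparable (otherwise one of them would sit strictly between the other and $u$, contradicting a covering), so $m<a$ and $m<b$. Meeting $(b,u)\in S$ with the reflexive pair $(a,a)\in S$ yields $(b\wedge a,\,u\wedge a)=(m,a)\in S$, and meeting $(a,u)\in T$ with $(b,b)\in T$ yields $(a\wedge b,\,u\wedge b)=(m,b)\in T$. Applying part \eqref{lemmalshspcWtlZb} to $S$ and to $T$ upgrades these to $m\prec a$ and $m\prec b$, and by $2$-uniformity the $S$-block through $(m,a)$ is exactly $\{m,a\}$ while the $T$-block through $(m,b)$ is exactly $\{m,b\}$. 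Hence $m=a\wedge b$ is the lower $S$-neighbour of $a$ and the lower $T$-neighbour of $b$, as claimed.

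The main obstacle is conceptual rather than computational: one must resist arguing by composition, since non-transitivity means $(x,z),(z,y)\in R$ does \emph{not} directly give $(x,y)\in R$. The whole argument instead hinges on replacing composition by the componentwise meet and choosing exactly the operation (and, in \eqref{lemmalshspcWtlZc}, the reflexive partner $(a,a)$ or $(b,b)$) that collapses $u$ and $z$ out of the surviving coordinates. The only routine points to verify are that the manufactured pairs involve genuinely distinct elements, so that $2$-uniformity is actually violated in \eqref{lemmalshspcWtlZa} and that part \eqref{lemmalshspcWtlZb} is applicable in \eqref{lemmalshspcWtlZc}.
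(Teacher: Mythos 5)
Your proof is correct and follows essentially the same strategy as the paper: componentwise meets of related pairs (with a reflexive partner in part \eqref{lemmalshspcWtlZc}) combined with the fact that blocks are two-element convex sublattices. The only cosmetic difference is in \eqref{lemmalshspcWtlZa}, where you meet $(x,z)$ with $(z,y)$ to get $(x,y)\in R$ and count $\{x,y,z\}$, whereas the paper meets $(x,z)$ with $(y,z)$ to get $(x\wedge y,z)\in R$ and invokes convexity of the interval $[x\wedge y,z]$; both are equally valid.
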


Although this lemma is a trivial folkloric  consequence of definitions, we give a short proof for convenience.

\begin{proof} By Zorn's Lemma, any $X\subseteq L$ with $X^2\subseteq R$ extends to a block of $R$, whereby $X^2\subseteq R$ implies that  $|X|\leq 2$. We know from, say, Cz\'edli~\cite{czglperrho} that the blocks of $R$ are convex sublattices. 
If  $x$ and $y$ were distinct lower $R$-neighbours of $z$,
then we would have $(u,z):=(x\wedge y,z\wedge z)\in R$, we could pick a block $B$ of $R$ such that $\set{u,z}\subseteq B$, so $[u,z]\subseteq B$, contradicting $\set{u,z,x,y}\subseteq [u,v]$ and $|B|=2$. 
This shows \eqref{lemmalshspcWtlZa}. 
Part  \eqref{lemmalshspcWtlZb} follows from the trivial fact that it describes the only possibilities where $x$ and $y$ belong to an interval of size at most 2. 
Finally, to prove \eqref{lemmalshspcWtlZc}, assume its  premise. Then  $a$ and $b$ are incomparable (in notation, $a\parallel b$), since both are lower covers of $u$ by \eqref{lemmalshspcWtlZb}. Hence, $\set{a,b}\cap\set{a\wedge b}=\emptyset$. Since 
$(a\wedge b,a)=(a\wedge b,a\wedge u)\in S$, we have that $\set{a\wedge b,a}^2\subseteq S$. Hence, $\set{a\wedge b,a}$ is a block of $S$, and $a\wedge b$ is the lower $S$-neighbour of $a$. Since $(a,S)$ and $(b,T)$ play a symmetric role, \eqref{lemmalshspcWtlZc} follows.
\end{proof}

If $u$ is  a   \emph{two-fold \ts-bottom}, then 
there are two possibilities. Namely, either the upper $T$-neighbour and the upper $S$-neighbour of $u$ are different and we say that $u$ is a \emph{split \ts-bottom}, or these two upper neighbours are the same and we call $u$ an \emph{adherent \ts-bottom}. Dually, if $u$ is a \emph{two-fold \ts-top}, then it is either a \emph{split \ts-top}, or an \emph{adherent \ts-top}, depending on whether its lower neighbours are distinct or equal, respectively. 
Armed with these concepts, we are going to prove the following lemma, which is a bit more  than what the necessity part of Theorem~\ref{thmmain} would require.

\begin{lemma}\label{lemmanckWS}
If  $\,T$ and $S$ are permuting $2$-uniform tolerances on a lattice $L$ without infinite chains, then the following four conditions are satisfied for every $u\in L$.
\begin{enumeratei}
\item\label{enucsgytbxe} If $u$ is a split \ts-top, $u\prec v$, and $(u,v)\in T\cup S$, then $v$ is a split \ts-top. 
\item\label{enucsgytbxf} If $u$ is an adherent \ts-top, $u\prec v$, and $(u,v)\in T\cup S$, then $v$ is an adherent \ts-top. 
\item\label{enucsgytbxg} If $u$ is a split \ts-bottom, $v\prec u$, and $(v,u)\in T\cup S$, then $v$ is a split \ts-bottom. 
\item\label{enucsgytbxh} If $u$ is an adherent \ts-bottom, $v\prec u$, and $(v,u)\in T\cup S$, then $v$ is an adherent \ts-bottom.
\end{enumeratei}
\end{lemma}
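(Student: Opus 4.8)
The plan is to prove the two bottom statements \eqref{enucsgytbxg} and \eqref{enucsgytbxh}; the top statements \eqref{enucsgytbxe} and \eqref{enucsgytbxf} then follow by passing to the dual lattice $L^\partial$, since 2-uniformity, permutability, the absence of infinite chains, and the split/adherent dichotomy are all self-dual. I would also use throughout the dual of Lemma~\ref{lemmalshspcWtlZ}\eqref{lemmalshspcWtlZa}, namely that upper $R$-neighbours are unique (apply the lemma in $L^\partial$). So fix a two-fold bottom $u$ with upper $T$-neighbour $w_T$ and upper $S$-neighbour $w_S$, and suppose $v\prec u$ and $(v,u)\in T\cup S$. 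Because both the permutation hypothesis and the split/adherent notions are symmetric in $T$ and $S$, I may assume $(v,u)\in T$; then $\set{v,u}$ is a $T$-block, so $u$ is the unique upper $T$-neighbour of $v$ and, in particular, $v$ is a $T$-bottom.

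The core step is to extract the upper $S$-neighbour of $v$ from permutability. From $(v,u)\in T$ and $(u,w_S)\in S$ I get $(v,w_S)\in T\circ S=S\circ T$, so there is some $y$ with $(v,y)\in S$ and $(y,w_S)\in T$. I then locate $y$ using Lemma~\ref{lemmalshspcWtlZ}\eqref{lemmalshspcWtlZb}: the relation $(v,y)\in S$ leaves only $y=v$, $y\prec v$, or $v\prec y$. The first two are impossible, for each would place $y\le v\prec u\prec w_S$, making $y$ and $w_S$ span a chain of length at least two and thereby violating \eqref{lemmalshspcWtlZb} applied to $(y,w_S)\in T$. Hence $v\prec y$, so $\set{v,y}$ is an $S$-block and $y$ is the upper $S$-neighbour of $v$. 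Thus $v$ is a two-fold bottom, which is already the content of condition \aeqref{eqjnKbxb} of amicability in this sub-case.

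It remains to show that $v$ inherits the type of $u$, and this is exactly where the remaining relation $(y,w_S)\in T$ is used. If $u$ is adherent, write $w:=w_T=w_S$; then $(u,w)\in T$ and $(y,w)\in T$, and the same covering analysis (ruling out $y=w$ and $w\prec y$ via \eqref{lemmalshspcWtlZb} together with $v\prec y$) forces $y\prec w$, so $u$ and $y$ are both lower $T$-neighbours of $w$. Uniqueness in Lemma~\ref{lemmalshspcWtlZ}\eqref{lemmalshspcWtlZa} yields $y=u$, whence the two upper neighbours of $v$ coincide and $v$ is adherent. If $u$ is split, so $w_T\neq w_S$, then $y=u$ would give $(u,w_S)\in T$ with $u\prec w_S$, making $w_S$ an upper $T$-neighbour of $u$ and hence, by uniqueness of upper neighbours, $w_S=w_T$---a contradiction. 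So $y\neq u$, the upper $T$- and $S$-neighbours of $v$ differ, and $v$ is split.

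I expect the main obstacle to be controlling the witness $y$ that permutability merely asserts to exist: the whole argument rests on combining the covering restriction of Lemma~\ref{lemmalshspcWtlZ}\eqref{lemmalshspcWtlZb} (which forces $v\prec y$, and later $y\prec w$) with the neighbour-uniqueness of \eqref{lemmalshspcWtlZa} and its dual. Once $y$ is pinned to the correct rung of the chain $v\prec u\prec w_S$, the split/adherent alternative resolves immediately; the delicate points are only to run the exclusions in the right order and to keep the $T$/$S$ symmetry reduction honest.
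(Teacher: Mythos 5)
Your proof is correct and is essentially the paper's argument in dual form: you apply permutability across the three-element chain through $u$, locate the witness with Lemma~\ref{lemmalshspcWtlZ}\eqref{lemmalshspcWtlZb}, and finish with the neighbour-uniqueness of Lemma~\ref{lemmalshspcWtlZ}\eqref{lemmalshspcWtlZa} and its dual, exactly as the paper does (it proves the top statements \eqref{enucsgytbxe}--\eqref{enucsgytbxf} and dualizes, whereas you prove the bottom ones). The only cosmetic difference is that you merge the split and adherent cases into a single setup with a common ``core step'' before branching.
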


\begin{proof} With the assumptions of the lemma, in order to prove \eqref{enucsgytbxe}, let $u$ be a split \ts-top, $u\prec v$ and $(u,v)\in T\cup S$. Since $T$ and $S$ play a symmetric role, we can assume that $(u,v)\in T$. The lower $T$-neighbour and the lower $S$-neighbour of $u$ will be denoted by $a$ and $b$, respectively; note that $a\parallel b$, since $a$ and $b$ are distinct lower covers of $u$ by Lemma~\ref{lemmalshspcWtlZ}\eqref{lemmalshspcWtlZb}.
Since $(b,v)\in S\circ T$ and $S\circ T=T\circ S$, there exists an element $c$ such that $(b,c)\in T$ and $(c,v)\in S$.  Observe that $v\not\leq  c$, because otherwise $b<u<v\leq c$ together with  $(b,c)\in T$ would violate Lemma~\ref{lemmalshspcWtlZ}\eqref{lemmalshspcWtlZb}. Hence, again by  \ref{lemmalshspcWtlZ}\eqref{lemmalshspcWtlZb}, $c\prec v$ and $c$ is a lower $S$-neighbour of $v$. If $c\neq u$, then $v$ is a split \ts-top, as required. Hence, it suffices to exclude that $c=u$. For the sake of contradiction, suppose that $c=u$. Then $(b,u)=(b,c)\in T$ indicates that $a$ and $b$ are distinct lower $T$-neighbours of $u$, contradicting  Lemma~\ref{lemmalshspcWtlZ}\eqref{lemmalshspcWtlZa}. This contradiction completes the argument proving  \eqref{enucsgytbxe}. By duality, we conclude the validity of \eqref{enucsgytbxg}.

Next, to prove \eqref{enucsgytbxf}, let $u$ be an adherent \ts-top, $u\prec v$ and $(u,v)\in T\cup S$. Again, we can assume that $(u,v)\in T$. Denote the common lower $T$-neighbour and $S$-neighbour of $u$ by $a$. Since $(a,v)\in S\circ T=T\circ S$, there is an element $c$ such that $(a,c)\in T$ and $(c,v)\in S$. Since both $c\leq a<u<v$ and $a<u<v\leq c$ are excluded by Lemma~\ref{lemmalshspcWtlZ}\eqref{lemmalshspcWtlZb}, 
we obtain from Lemma~\ref{lemmalshspcWtlZ}\eqref{lemmalshspcWtlZb} that $a\prec c\prec v$. As two upper $T$-neighbours of $a$, the elements $u$ and $c$ are the same by the dual of Lemma~\ref{lemmalshspcWtlZ}\eqref{lemmalshspcWtlZa}. Hence, $(u,v)=(c,v)\in S$ shows that $v$ is an adherent \ts-top, as required. This shows the validity of  \eqref{enucsgytbxf}, and  \eqref{enucsgytbxh} follows also by duality.
\end{proof}

\begin{proof}[Proof of Theorem~\ref{thmmain}]
The necessity part follows from Lemma~\ref{lemmanckWS}.
In order to prove the sufficiency part, assume that $T$ and $S$ are amicable. Since $T$ and $S$ play a symmetric role, it suffices to show that $T\circ S\subseteq S\circ T$. So let $(a,b)\in T\circ S$; we need to show that $(a,b)\in S\circ T$. We can assume that $(a,b)\notin T\cup S$, since otherwise the task is trivial.
By the definition of $T\circ S$, there exists an element $u$ such that $(a,u)\in T$ and $(u,b)\in S$. 
Apart from duality, Lemma~\ref{lemmalshspcWtlZ}\eqref{lemmalshspcWtlZb} allows only two cases: either $a\prec u\succ b$, or $a\prec u\prec b$. 
Since  Lemma~\ref{lemmalshspcWtlZ}\eqref{lemmalshspcWtlZc} implies immediately that  $(a,b)\in S\circ T$ in the first case, it suffices to deal only with the second case. That is, $a\prec u\prec b$. Let $x_0:=a$, $x_1:=u$, $x_2:=b$, and define a sequence $x_3,x_4,\dots$ of further elements as follows. If $i$ is even and $x_i$ is a $T$-bottom, then let $x_{i+1}$ be the unique upper $T$-neighbour of $x_i$. If $i$ is odd and $x_i$ is an $S$-bottom, then let $x_{i+1}$ be the unique upper $S$-neighbour of $x_i$. Note that, in addition to the elements $x_i$, $i>2$, the elements $x_1=u$ and $x_2=b$ also obey these rules.
Since $x_2\prec x_3\prec x_4\prec \dots$ but $L$ has  no infinite chain, there is a unique $2\leq n\in \mathbb N$ such that $x_2, x_3,\dots, x_n$ are defined but $x_{n+1}$ is not. There are two (similar) cases depending on the parity of $n$. First, assume that $n$ is even. Since the sequence has terminated with $x_n$, the element $x_{n+1}$ does not exists, that is, $x_n$ is not a $T$-bottom. Hence, $x_n$ is a $T$-top. But $x_n$ is also an $S$-top, whereby $x_n$ is a two-fold \ts-top. The same argument, with the roles of $T$ and $S$ interchanged, shows that $x_n$ is a two-fold \ts-top also in the second case where $n$ is odd. So, $x_n$ is a two-fold \ts-top no matter what the parity of $n$ is. We claim that 
\begin{equation}
\text{$x_{n-2}$ is a two-fold \ts-bottom.}
\label{eqtxtzrhBtrsN}
\end{equation}
 If $x_n$ is an adherent \ts-top, then we obtain from Lemma~\ref{lemmalshspcWtlZ}\eqref{lemmalshspcWtlZa} that 
$x_{n-1}$ is an adherent \ts-bottom, whence $x_{n-2}$ is a two-fold \ts-bottom by \aeqref{eqjnKbxb}, as required. If the two-fold \ts-top $x_n$ is not an adherent one, then it is a split \ts-top, and there are two cases.
If $n$ is even, then $x_{n-1}$ is a lower $S$-neighbour of $x_n$, and $x_n$ has a unique lower $T$-neighbour $c$, which is distinct from $x_{n-1}$. By Lemma~\ref{lemmalshspcWtlZ}\eqref{lemmalshspcWtlZc}, $x_{n-1}\wedge c$ is a lower $T$-neighbour of $x_{n-1}$ and a lower $S$-neighbour of $c$. But $x_{n-2}$ is also a lower $T$-neighbour of $x_{n-1}$, whence Lemma~\ref{lemmalshspcWtlZ}\eqref{lemmalshspcWtlZa} gives that $x_{n-1}\wedge c=x_{n-2}$, and so $x_{n-2}$ is a two-fold (split) \ts-bottom, as required. The same argument works, with $T$ and $S$ interchanged, if $n$ is odd. Thus, \eqref{eqtxtzrhBtrsN} has
been verified. 

Next, we obtain from \aeqref{eqjnKbxb} and \eqref{eqtxtzrhBtrsN}  that $a=x_0$ is also a two-fold \ts-bottom. There are two cases to consider. 
First, assume that $a$ is a split \ts-bottom. Then, in addition that $u$ is an upper $T$-neighbour of $a$, the element $a$ has an upper $S$-neighbour $d$ such that $d\neq u$. By the dual of Lemma~\ref{lemmalshspcWtlZ}\eqref{lemmalshspcWtlZc}, $u\vee d$ is an upper $S$-neighbour of $u$ and an upper $T$-neighbour of $d$. Since $b$ is also  an upper $S$-neighbour of $u$, the dual of  Lemma~\ref{lemmalshspcWtlZ}\eqref{lemmalshspcWtlZa} gives that $u\vee d=b$. Hence, $(a,d)\in S$ and $(d,b)=(d,u\vee d)\in T$ yield that $(a,b)\in S\circ T$, as required. 

Second, assume that  $a$ is an adherent \ts-bottom. Then $u=x_1$ is an (adherent) two-fold \ts-top. Applying \aeqref{eqjnKbxa}, we have that  $b=x_2$ is also a  two-fold \ts-top. Hence, $b$ has a unique lower $T$-neighbour $e$. We claim that $e=u$; for the sake of contradiction, suppose that $u\neq e$.  Applying Lemma~\ref{lemmalshspcWtlZ}\eqref{lemmalshspcWtlZc}, it follows that $u\wedge e$ is a lower $T$-neighbour of $u$. But $a$ is also 
 a lower $T$-neighbour of $u$, whereby  Lemma~\ref{lemmalshspcWtlZ}\eqref{lemmalshspcWtlZa} give that $u\wedge e=a$. On the other hand, Lemma~\ref{lemmalshspcWtlZ}\eqref{lemmalshspcWtlZc} also gives that $u\wedge e=a$ is a lower $S$-neighbour of $e$. Hence, $a$ has two distinct upper $S$-neighbours, $u$ and $e$, which contradicts the dual of Lemma~\ref{lemmalshspcWtlZ}\eqref{lemmalshspcWtlZa}. This contradiction shows that $e=u$.  Armed with this equality, 
 $(a,u)\in S$ and $(u,b)=(e,b)\in T$, and the required $(a,b)\in S\circ T$ follows. We have shown that $T\circ S\subseteq S\circ T$, and the proof of Theorem~\ref{thmmain} is complete.
\end{proof}

\end{document}